\title{Computing the Varchenko Determinant of a Bilinear Form}
\author{Hery Randriamaro \thanks{Mathematisches Forschungsinstitut Oberwolfach \\ Schwarzwaldstraße 9-11, 77709 Oberwolfach, Germany \\ E-mail: \texttt{hery.randriamaro@outlook.com}}}
\newtheorem{theorem}{Theorem}[section]
\newtheorem{lemma}[theorem]{Lemma}
\newtheorem{proposition}[theorem]{Proposition}
\theoremstyle{definition}
\theoremstyle{remark}
\begin{document}

\maketitle

\begin{abstract}
\noindent The Varchenko determinant is the determinant of the bilinear form associated to a real hyperplane arrangement. We show that we can obtain the exact value of this determinant for certain hyperplane arrangements if we know the edges which are relevant.   

\bigskip 

\noindent \textsl{Keywords}: Bilinear Form; Real Hyperplane Arrangement; Varchenko Determinant

\smallskip

\noindent \textsl{AMS Classification}: 05A05, 05A19 
\end{abstract}

\section{Introduction}

\noindent Let $x = (x_1, \dots, x_n)$ be a variable of the Euclidean space $\mathbb{R}^n$, and $a_1, \dots, a_n, b$ real coefficients such that $(a_1, \dots, a_n) \neq (0, \dots, 0)$. A hyperplane $H$ of $\mathbb{R}^n$ is a $(n-1)$--dimensional affine subspace $H := \{ x \in \mathbb{R}^n \ |\ a_1 x_1 + \dots + a_n x_n = b \}$. An arrangement of hyperplanes in $\mathbb{R}^n$ is a finite set of hyperplanes. The most known hyperplane arrangement is certainly the braid arrangement $\mathcal{B}_n = \big\{\{x \in \mathbb{R}^n \ |\ x_i-x_j=0\}\big\}_{1 \leq i < j \leq n}$. Hyperplane arrangement theory is currently a very active area of research, combining ideas from algebraic combinatorics, algebraic topology, and algebraic geometry. In the preface of their book \cite{OrTe}, Orlik and Terao wrote \emph{Arrangements are easily defined and may be enjoyed at levels ranging from the recreational to the expert, yet these simple objects lead to deep and beautiful results. Their study combines methods from many areas of mathematics and reveals unexpected connections.} The bilinear form of a hyperplane arrangement defined by Varchenko confirms their affirmation. The Varchenko determinant is the determinant of this bilinear form.

\smallskip

\noindent An edge of a hyperplane arrangement $\mathcal{A}$ is a nonempty intersection of some of its hyperplanes. Denote by $L(\mathcal{A})$ the set of all edges of $\mathcal{A}$. The arrangement of hyperplanes in $\mathcal{A}$ containing an edge $E$ in $L(\mathcal{A})$ is $$\mathcal{A}_E := \{H \in \mathcal{A}\ |\ E \subseteq H\}.$$
The hyperplane arrangement in the edge $E$ cut by $\mathcal{A}$ is
$$\mathcal{A}^E := \{H \cap E\ |\ H \in \mathcal{A},\, E \nsubseteq H\}.$$
A chamber of a hyperplane arrangement $\mathcal{A}$ is a connected component of the complement $\mathbb{R}^n \setminus \bigcup_{H \in \mathcal{A}} H$. Denote the set of all chambers of $\mathcal{A}$ by $\mathfrak{C}(\mathcal{A})$.

\smallskip

\noindent Assign a variable $a_H$ called weight to each hyperplane $H$ of an arrangement $\mathcal{A}$. Define the weight $\mathsf{a}(E)$ of an edge $E$ by $$\mathsf{a}(E) := \prod_{\substack{H \in \mathcal{A} \\ E \subseteq H}} a_H.$$

\noindent The multiplicity of an edge $E$ is $$l(E) := n(E)p(E),$$ where $n(E) := |\mathfrak{C}(\mathcal{A}^E)|$, and $p(E)$ is defined below.\\
For every edge $E$ of codimension $r$, let $N$ be an $r$-dimensional normal subspace to $E$. All hyperplanes of the resulting arrangement $(\mathcal{A}_E)^N$ pass through the point $\{v\} = E \cap N$. Consider the hyperplane arrangement which $(\mathcal{A}_E)^N$ induces in the tangent space $T_vN$. It determines another hyperplane arrangement $P\mathcal{A}_E$ in the projectivization of $T_vN$.\\ 
A chamber of an arrangement is bounded with respect to a hyperplane if the closure of this chamber does not intersect the hyperplane. For any arrangement $\mathcal{A}'$ in a real projective space, the numbers of chambers which are bounded with respect to its hyperplanes are all the same \cite[Theorem 1.5]{Va}, and we denote this number by $e(\mathcal{A}')$.\\
Finally, for an edge $E$ in $L(\mathcal{A})$, define $p(E) := e(P\mathcal{A}_E)$. 

\smallskip

\noindent Let $R_{\mathcal{A}} = \mathbb{Z}[a_H \,|\, H \in \mathcal{A}]$ be the ring of polynomials in variables $a_H$. The module of $R_{\mathcal{A}}$-linear combinations of chambers of the hyperplane arrangement $\mathcal{A}$ is
$$M_{\mathcal{A}} := \{ \sum_{C \in \mathfrak{C}(\mathcal{A})} x_C C\ |\ x_C \in R_{\mathcal{A}}\}.$$

\noindent Let $\mathcal{H}(C,D)$ be the set of hyperplanes separating the chambers $C$ and $D$ in $\mathfrak{C}(\mathcal{A})$. Define the $R_{\mathcal{A}}$-bilinear symmetric form $\mathsf{B}: M_{\mathcal{A}} \times M_{\mathcal{A}} \rightarrow R_{\mathcal{A}}$ by $$\mathsf{B}(C,C):=1,\ \text{and}\ \mathsf{B}(C,D):= \prod_{H \in \mathcal{H}(C,D)} a_H\ \text{if}\ C \neq D.$$
The Varchenko matrix of the hyperplane arrangement $\mathcal{A}$ is the matrix
$$\mathscr{M}_{\mathcal{A}} := \big(\mathsf{B}(C,D)\big)_{C,D \in \mathfrak{C}(\mathcal{A})}$$
of the bilinear symmetric form $\mathsf{B}$. The Varchenko determinant of the hyperplane arrangement $\mathcal{A}$ is the determinant $$\det \mathcal{A} := \det \mathscr{M}_{\mathcal{A}}.$$
The formula of this determinant due to Varchenko is \cite[(1.1) Theorem]{Va2}
$$\det \mathcal{A} = \prod_{E \in L(\mathcal{A})} \big(1- \mathsf{a}(E)^2\big)^{l(E)}.$$

\noindent It is, however, not feasible to directly use this formula to compute a determinant from a certain complexity level. For example, one can not deduce $\det \mathcal{B}_{24}$ directly from it. In this article, we show that we can work around this difficulty for certain hyperplane arrangements if we know the edges which are relevant. In this purpose, we use a clearer definition of the multiplicity $l(E)$ written in an article of Denham and Hanlon \cite[2. The Nullspace of the $B$ Matrices]{DeHa}: First choose a hyperplane $H$ containing $E$. Then $l(E)$ is half the number of chambers $C$ which have the property that $E$ is the minimal edge containing $\bar{C} \cap H$.\\
We determine the relevant edges in the next section. Then, we compute the Varchenko determinants of some hyperplane arrangements in the last section.

\section{The Relevant Edges}

\noindent In this section, we remove in the Varchenko determinant the factors $\big(1- \mathsf{a}(E)^2\big)^{l(E)}$ such that $l(E) = 0$. In the edge set $L(\mathcal{B}_7)$, for example, we do not need to consider the edges $\{x \in \mathbb{R}^7\ |\ x_1 = x_2,\, x_4 = x_5\}$ and $\{x \in \mathbb{R}^7\ |\ x_2 = x_4 = x_5,\, x_1 = x_7\}$ whose multiplicity is $0$. This removing simplifies the Varchenko determinant computing. 

\smallskip

\noindent Take a hyperplane arrangement $\mathcal{A}$ in $\mathbb{R}^n$. We say that an edge $E$ of $\mathcal{A}$ is relevant if $l(E) \neq 0$. Denote the relevant edge set of $\mathcal{A}$ by 
$$\mathfrak{R}_{\mathcal{A}} := \{E \in L_{\mathcal{A}}\ |\ l(E) \neq 0\}.$$

\noindent To determine $\mathfrak{R}_{\mathcal{A}}$, we have to consider the faces of the chambers. Recall that the face set of a chamber $C$ in $\mathfrak{C}_{\mathcal{A}}$ resp. of the chambers in $\mathfrak{C}_{\mathcal{A}}$ is $$\mathcal{F}(C) := \{\bar{C} \cap E \ |\ E \in L_{\mathcal{A}},\, \bar{C} \cap E \neq \emptyset\} \quad \text{resp.} \quad \mathcal{F}(\mathfrak{C}_{\mathcal{A}}) := \bigcup_{C \in \mathfrak{C}_{\mathcal{A}}} \mathcal{F}(C).$$
Define the following subset of $\mathcal{F}(C)$ resp. $\mathcal{F}(\mathfrak{C}_{\mathcal{A}})$
$$\mathcal{S}(C) := \{\bar{C} \cap H \ |\ H \in \mathcal{A},\, \bar{C} \cap H \neq \emptyset\} \quad \text{resp.} \quad \mathcal{S}(\mathfrak{C}_{\mathcal{A}}) := \bigcup_{C \in \mathfrak{C}_{\mathcal{A}}} \mathcal{S}(C).$$

\begin{lemma} \label{RG}
Let $\mathcal{A}$ be a hyperplane arrangement in $\mathbb{R}^n$. Then $$\mathfrak{R}_{\mathcal{A}} = \big\{\langle F \rangle\ |\ F \in \mathcal{S}(\mathfrak{C}_{\mathcal{A}}) \big\}.$$
\end{lemma}

\begin{proof}
Let $F \in \mathcal{S}(\mathfrak{C}_{\mathcal{A}})$. We have,
$$\langle F \rangle = \bigcap_{\substack{H \in \mathcal{A} \\ F \subseteq H}} H.$$
From definition, there exist a chamber $C$ and a hyperplane $H$ such that $\bar{C} \cap H = F$. Since $\langle F \rangle$ is the minimal edge containing $F$, then $l\big( \langle F \rangle \big) \geq 1$.

\smallskip

\noindent Now, take $E \in L_{\mathcal{A}} \setminus \big\{\langle F \rangle\ |\ F \in \mathcal{S}(\mathfrak{C}_{\mathcal{A}})\big\}$. Suppose that there exist a chamber $C$ and a hyperplane $H$ such that $E$ is the minimal edge containing $F = \bar{C} \cap H$. It means that $E \varsubsetneq \langle F \rangle$, which impossible since $F \subseteq E$.
\end{proof}

\begin{proposition} \label{PrRe}
Let $\mathcal{A}$ be a hyperplane arrangement in $\mathbb{R}^n$. For every relevant edge $E$, we fix a hyperplane $H_E$ of $\mathcal{A}$ containing it. Then,
\begin{align*}
& \det \mathcal{A} = \prod_{E \in \mathfrak{R}_{\mathcal{A}}} \big(1- \mathsf{a}(E)^2\big)^{l(E)} \\
& \text{with}\ l(E) = \frac{1}{2}\, \big| \big\{C \in \mathfrak{C}_{\mathcal{A}}\ |\ \langle \bar{C} \cap H_E \rangle = E \big\} \big|.
\end{align*}
\end{proposition}

\begin{proof}
It is clear that $$\prod_{E \in L_{\mathcal{A}}} \big(1- \mathsf{a}(E)^2\big)^{l(E)} = \prod_{E \in \mathfrak{R}_{\mathcal{A}}} \big(1- \mathsf{a}(E)^2\big)^{l(E)}.$$
Furthermore, from Lemma \ref{RG}, we deduce that $E$ is the minimal edge containing $\bar{C} \cap H_E$ if and only if $\langle \bar{C} \cap H_E \rangle = E$.
\end{proof}

\section{Some Varchenko Determinants}

\paragraph{Central $2$--Dimensional Arrangement.} It is a hyperplane arrangement $\mathcal{A} = \{H_1, \dots, H_m\}$ in $\mathbb{R}^2$ such that the intersection $\bigcap_{i=1}^m H_i$ is the origin $\{0\}$. It is the case of the hyperplane arrangement associated to the dihedral group $D_m$ having $2m$ elements with $$H_i = \big\{x \in \mathbb{R}^2\ |\ x_1 \cos \frac{(i-1)\pi}{m} + x_2 \sin \frac{(i-1)\pi}{m} = 0 \big\}.$$ Assign the weight $a_i$ to the hyperplane $H_i$. Then, $\mathfrak{R}_{\mathcal{A}} = \big\{H_1, \dots, H_m, \{0\}\big\}$, and
\begin{itemize}
\item $\mathsf{a}(H_i) = a_i$ with $l(H_i)=2$, 
\item $\mathsf{a}(\{0\}) = \prod_{i=1}^m a_i$ with $l(\{0\})=m-2$.  
\end{itemize}
Then, $$\det \mathcal{A} = \big(1- \prod_{i=1}^m a_i^2\big)^{m-2} \, \prod_{j=1}^m (1- a_j^2)^2.$$

\paragraph{General Position and the Hypercubic Arrangement.} A hyperplane arrangement $\mathcal{G}_n = \{H_1, \dots, H_{n+1}\}$ in $\mathbb{R}^n$ is in general position if, for every subset $P$ of $[n]$ such that $|P|=p$, we have $\dim \bigcap_{i \in P} H_i = n-p$. It is the case of the hyperplane arrangement such that
\begin{itemize}
\item $\forall i \in [n],\, H_i = \{x \in \mathbb{R}^n\ |\ x_i=0\}$,
\item $H_{n+1} = \{x \in \mathbb{R}^n\ |\ x_1 + \dots + x_n = 1\}$.
\end{itemize}  
Assign the weight $a_i$ to the hyperplane $H_i$.

\smallskip

\noindent And for $\alpha, \beta$ in $\mathbb{R}$ with $\alpha \neq \beta$, the hypercubic arrangement is the hyperplane arrangement $\mathcal{C}_n = \{H_{1,\alpha}, H_{1,\beta}, \dots, H_{n,\alpha}, H_{n,\beta}\}$ in $\mathbb{R}^n$ such that $H_{i,\alpha\, \text{resp.}\, \beta} = \{x \in \mathbb{R}^n\ |\ x_i = \alpha\ \text{resp.}\ \beta\}$.\\
Assign the weight $a_{i,\alpha}$ resp. $a_{i,\beta}$ to the hyperplane $H_{i,\alpha}$ resp. $H_{i,\beta}$.

\smallskip

\noindent Both hyperplane arrangements have the property
$$\forall H \in \mathcal{G}_n\ \text{resp.}\ \mathcal{C}_n,\, \forall C \in \mathfrak{C}(\mathcal{G}_n)\ \text{resp.}\ \mathfrak{C}(\mathcal{C}_n),\, \langle C \cap H \rangle = H\ \text{or}\ \langle C \cap H \rangle = \emptyset.$$
From Lemma \ref{RG}, we deduce that $\mathfrak{R}(\mathcal{G}_n) = \mathcal{G}_n$, and $\mathfrak{R}(\mathcal{C}_n) = \mathcal{C}_n$. 

\smallskip

\noindent Moreover, since $|\mathcal{G}_n| = 2^n-1$, and $|\mathcal{C}_n| = 3^n$, then 
$$\big|\{C \in \mathcal{G}_n\ |\ H \in \mathcal{G}_n,\, \langle C \cap H \rangle = H\}\big| = 2^n-2 \ \text{and}\ \big|\{C \in \mathcal{C}_n\ |\ H \in \mathcal{C}_n,\, \langle C \cap H \rangle = H\}\big| = 2 \times 3^{n-1}.$$

\noindent Thus $$\det \mathcal{G}_n = \prod_{i=1}^n (1- a_i^2)^{2^{n-1}-1} \ \text{and} \ \det \mathcal{C}_n = \prod_{i=1}^n (1- a_{i,\alpha}^2)^{3^{n-1}} (1- a_{i,\beta}^2)^{3^{n-1}}.$$

\paragraph{Braid Arrangement.} It consists of the $\binom{n}{2}$ hyperplanes $H_{i,j} = \{x \in \mathbb{R}^n \ |\ x_i - x_j=0\}$, with $1 \leq i < j \leq n$. We assign the weight $a_{i,j}$ to the hyperplane $H_{i,j}$.

\begin{proposition} \label{detAn}
Let $n \geq 2$. We have $$\det \mathcal{B}_n = \prod_{\substack{I \in 2^{[n]} \\ |I| \geq 2}} \Big(1- \prod_{\{i,j\} \in \binom{I}{2}} a_{i,j}^2\Big)^{(|I|-2)!\,(n-|I|+1)!}.$$
\end{proposition}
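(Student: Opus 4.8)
The plan is to apply Varchenko's product formula $\det\mathsf{M}(\mathcal{A}_{A_{n-1}})=\prod_E\big(1-\mathsf{a}(E)^2\big)^{l(E)}$, the product running over the edges $E$ of codimension at least one, and to reduce the whole computation to a bookkeeping indexed by set partitions of $[n]$. Since $\bigcap_{i\sim j}H_{i,j}$ depends only on the partition induced by the relation $\sim$, the edges of $\mathcal{A}_{A_{n-1}}$ are exactly the subspaces $E_\pi=\{x\in\mathbb{R}^n : x_i=x_j \text{ whenever }i,j\text{ lie in a common block of }\pi\}$ as $\pi$ ranges over the set partitions of $[n]$, with $E_\pi$ of codimension at least one precisely when $\pi$ is not the all-singletons partition. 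Because $H_{i,j}\supseteq E_\pi$ exactly when $i,j$ lie in a common block, the edge weight is $\mathsf{a}(E_\pi)=\prod_{B\in\pi}\prod_{\{i,j\}\in\binom{B}{2}}q_{i,j}$, so that $1-\mathsf{a}(E_\pi)^2=1-\prod_{B\in\pi}\prod_{\{i,j\}\in\binom{B}{2}}q_{i,j}^2$.

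The core of the argument is to evaluate $l(E_\pi)$ from the minimal-edge description. I would identify the chambers with the permutations $w\in S_n$, the chamber $C_w$ being $\{x_{w(1)}<\dots<x_{w(n)}\}$. The hyperplanes containing $E_\pi$ are exactly the $H_{a,b}$ with $a,b$ in a common block of $\pi$; fix one of them, say with $\{a,b\}\subseteq B$ for a non-singleton block $B$ of $\pi$, and use $H_{a,b}$ in the definition of the multiplicity. Intersecting the closure $\overline{C_w}=\{x_{w(1)}\leq\dots\leq x_{w(n)}\}$ with $\{x_a=x_b\}$ forces equality of precisely those coordinates whose indices lie between $a$ and $b$ inclusively in the word $w$, so the minimal edge containing the face $\overline{C_w}\cap H_{a,b}$ is $E_\sigma$, where the unique non-singleton block of $\sigma$ is that $w$-interval from $a$ to $b$. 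Two consequences: (i) if $\pi$ has two or more non-singleton blocks then $E_\pi$ is never of this form, no chamber contributes, and $l(E_\pi)=0$; (ii) if $\pi$ has a single non-singleton block $I$ with $|I|=m$, then $\overline{C_w}\cap H_{a,b}$ has minimal edge $E_\pi$ exactly when the $m$ elements of $I$ occupy $m$ consecutive positions of $w$ with $a$ and $b$ at the two extremes, and counting these words — $(n-m+1)!$ ways to shuffle the solid block of $I$-entries among the other $n-m$ entries, times $2\,(m-2)!$ internal orderings of that block with $a,b$ at its ends — gives $2\,(m-2)!\,(n-m+1)!$, hence $l(E_\pi)=(m-2)!\,(n-m+1)!$.

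It then remains to assemble the product: only the partitions with exactly one non-singleton block contribute a non-trivial factor, $\pi\mapsto(\text{its non-singleton block }I)$ is a bijection onto $\{I\in 2^{[n]} : |I|\geq 2\}$, and under it $\mathsf{a}(E_\pi)^2=\prod_{\{i,j\}\in\binom{I}{2}}q_{i,j}^2$ with multiplicity $(|I|-2)!\,(n-|I|+1)!$, which is exactly the claimed expression. As a consistency check one may observe that this multiplicity is the product of Crapo's beta invariant of the arrangement of hyperplanes through $E_\pi$ (a braid arrangement $A_{m-1}$, whose beta invariant is $(m-2)!$) and the number of chambers of the arrangement induced on $E_\pi$ (again a braid arrangement, on $n-m+1$ points, with $(n-m+1)!$ chambers). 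I expect the only real obstacle to be the heart of step (ii): pinning down exactly the minimal (support) edge of the face $\overline{C_w}\cap H_{a,b}$ and then performing the two enumerations cleanly, together with checking — as the definition of $l(E)$ in any case guarantees — that the count is independent of which pair $\{a,b\}\subseteq I$ was used, a fact that here also follows from the $S_I$-symmetry of the configuration.
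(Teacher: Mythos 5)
Your proof is correct and follows essentially the same route as the paper: identify the chambers with permutations, observe that the face $\overline{C_w}\cap H_{a,b}$ generates the edge determined by the $w$-interval between $a$ and $b$ (so only edges $E(I)$ with a single non-singleton block have nonzero multiplicity, which is the paper's ``relevant edges'' reduction), and then perform the same enumeration $2\,(m-2)!\,(n-m+1)!$ of contributing chambers, halved by the definition of $l(E)$. The only cosmetic differences are that you index edges by set partitions and fold the paper's separate relevant-edge lemma into the direct computation, plus the added beta-invariant consistency check.
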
 

\noindent This determinant was also calculated by Duchamp et al. \cite[6.4.2 A Decomposition of $B_n$]{DuEtAl} using the diagonal solutions of the Yang-Baxter equation. 

\smallskip

\noindent Each chamber of $\mathcal{B}_n$ is defined by $\{x \in \mathbb{R}^n\ |\ x_{\sigma(1)} > x_{\sigma(2)} > \dots > x_{\sigma(n)}\}$, where $\sigma$ is a permutation of $[n]$. We write $\{x_{\sigma(1)} > x_{\sigma(2)} > \dots > x_{\sigma(n)}\}$ for simplicity.\\
Let $I = \{i_1, \dots, i_r\}$ be a subset of $[n]$, with $|I| \geq 2$. Denote by $E(I)$ the edge
$$E(I) := \bigcap_{\{i,j\} \in \binom{\{i_1, \dots, i_r\}}{2}} H_{i,j}.$$

\begin{lemma}
Let $n \geq 2$. We have $\mathfrak{R}(\mathcal{B}_n) = \big\{E(I)\ |\ I \subseteq [n],\, |I| \geq 2\big\}$.
\end{lemma}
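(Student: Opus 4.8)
The plan is to apply Lemma~\ref{RG}, which identifies the relevant edges $\mathcal{R}(\mathcal{A})$ with the edges $\mathcal{E}(\mathcal{A})$ generated by faces of chambers. So it suffices to show that an edge of $\mathcal{A}_{A_{n-1}}$ is generated by some face of some chamber if and only if it has the form $E(I)$ for a subset $I \subseteq [n]$ with $|I| \geq 2$. First I would recall that the full lattice of edges $\mathcal{L}(\mathcal{A}_{A_{n-1}})$ consists precisely of the subspaces obtained by intersecting subfamilies of the hyperplanes $\{x_i = x_j\}$; such an intersection is determined by a set partition $\pi$ of $[n]$, namely $E(\pi) = \{x \ |\ x_i = x_j \text{ whenever } i,j \text{ lie in the same block of } \pi\}$, and the edges of the form $E(I)$ are exactly those $E(\pi)$ whose partition $\pi$ has a single nonsingleton block $I$.

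Next I would establish the inclusion $\big\{E(I)\big\} \subseteq \mathcal{E}(\mathcal{A}_{A_{n-1}})$ by exhibiting, for each $I$, a chamber and a face generating $E(I)$. Concretely, pick a permutation $\sigma$ listing the elements of $I$ consecutively, say in positions $1,\dots,|I|$, followed by the elements of $[n] \setminus I$; the chamber $\{x_{\sigma(1)} > \dots > x_{\sigma(n)}\}$ has, on its closure, the face obtained by collapsing the first $|I|$ strict inequalities to equalities while keeping the remaining ones strict, i.e. $\{x_{\sigma(1)} = \dots = x_{\sigma(|I|)} > x_{\sigma(|I|+1)} > \dots > x_{\sigma(n)}\}$. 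This face is nonempty, lies in the closure of the chamber, and the hyperplanes of $\mathcal{A}_{A_{n-1}}$ containing it are exactly the $H_{i,j}$ with $i,j \in I$, so the edge it generates is precisely $E(I)$.

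For the reverse inclusion I would argue that any face $F$ of any chamber generates an edge of the form $E(I)$. A chamber is a region $\{x_{\sigma(1)} > \dots > x_{\sigma(n)}\}$, and a face of its closure is obtained by turning some of these $n-1$ consecutive strict inequalities into equalities. If the set of positions turned into equalities is not an interval, then the transitive closure of the resulting relations forces additional equalities, so without loss of generality a face corresponds to collapsing a single block of consecutive positions, giving exactly a partition with one nonsingleton block $I = \{\sigma(a), \sigma(a+1), \dots, \sigma(b)\}$; hence $E_F = E(I)$. (If two or more disjoint blocks of positions are collapsed, the resulting subspace is $E(\pi)$ for a partition $\pi$ with several nonsingleton blocks, and I must check this is \emph{not} a face of the chamber — it is of lower dimension than any genuine codimension-matching face, reflecting the fact noted in the paper that e.g. $\{x_1=x_2\}\cap\{x_4=x_5\}$ has multiplicity zero.) Combining the two inclusions with Lemma~\ref{RG} yields the claim.

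The main obstacle I anticipate is the reverse inclusion: one needs to be careful about which intersections of hyperplanes actually arise as $E_F$ for a genuine face $F$ versus which are merely elements of the intersection lattice $\mathcal{L}$. The clean way to handle this is to use the combinatorial description of the face poset of the braid arrangement in terms of ordered set partitions (as referenced via \cite{AbBr}): faces of $\mathcal{A}_{A_{n-1}}$ correspond to ordered set partitions of $[n]$, and the edge generated by a face corresponding to the ordered set partition $(B_1, \dots, B_k)$ is $E(B_1) \cap \dots \cap E(B_k)$ in the obvious sense. The relevant edges are then precisely those where the underlying unordered partition has at most one block of size $\geq 2$, which is the desired set $\{E(I) \ |\ |I| \geq 2\}$ together with the trivial edge $\mathbb{R}^n$ (excluded since $l = 0$ when the edge is the whole space, or rather it contributes the factor $(1-1)^0$); making this dimension/multiplicity bookkeeping precise is the only delicate point.
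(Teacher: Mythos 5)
Your overall strategy --- reduce to Lemma~\ref{RG} and then identify the edges generated by faces of chambers --- is the same as the paper's, and your forward inclusion (realizing each $E(I)$ as the edge generated by a face of a chamber in which $I$ appears as a consecutive block) is exactly what the paper does. The gap is in the reverse inclusion, and it is a real one. You work with the standard polyhedral notion of face, under which a face of the chamber $\{x_{\sigma(1)} > \dots > x_{\sigma(n)}\}$ is obtained by turning an arbitrary subset of the $n-1$ consecutive inequalities into equalities; but then faces with two or more nonsingleton blocks, such as $\{x_1 = x_2 > x_3 > x_4 = x_5\}$ inside the closure of $\{x_1 > x_2 > x_3 > x_4 > x_5\}$, genuinely exist, are honest codimension-$2$ faces (not of ``lower dimension than any genuine codimension-matching face''), and generate edges like $\{x_1 = x_2\} \cap \{x_4 = x_5\}$ that are not of the form $E(I)$. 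Both of your attempted ways around this fail: turning a non-interval set of consecutive inequalities into equalities does \emph{not} force additional equalities by transitivity, and the resulting sets \emph{are} faces of the chamber in the usual sense. Your closing assertion that the relevant edges are precisely those whose underlying partition has at most one block of size $\geq 2$ is exactly the statement to be proved, so as written the argument is circular.

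The missing ingredient is the paper's specific definition of $\mathcal{F}(C)$: a face of $C$ is, by definition, a nonempty set of the form $\overline{C} \cap H$ for a \emph{single} hyperplane $H \in \mathcal{A}$ (this restriction is also built into the definition of the multiplicity $l(E)$). For $C = \{x_{\sigma(1)} > \dots > x_{\sigma(n)}\}$ and $H = H_{s,t}$ with $\sigma(i) = s$, $\sigma(j) = t$, $i<j$, the single equation $x_{\sigma(i)} = x_{\sigma(j)}$ imposed on the closure $\{x_{\sigma(1)} \geq \dots \geq x_{\sigma(n)}\}$ \emph{does} collapse, by transitivity, exactly the interval of positions from $i$ to $j$ and nothing else, so $E_F = E\big(\{\sigma(i), \dots, \sigma(j)\}\big)$; since every $I$ with $|I|\geq 2$ arises from some choice of $\sigma$, $i$, $j$, this single computation gives both inclusions at once and is the paper's whole proof. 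Multi-block faces never arise as $\overline{C} \cap H$ for a single $H$, which is precisely why edges such as $\{x_1 = x_2\} \cap \{x_4 = x_5\}$ are excluded from $\mathcal{R}(\mathcal{A}_{A_{n-1}})$.
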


\begin{proof}
Consider a hyperplane $H_{s,t}$ of $\mathcal{B}_n$, and a permutation $\sigma$ of $[n]$ such that $\sigma(i) = s$ and $\sigma(j) = t$ with $i < j$. Then,
\begin{align*}
\big\langle H_{s, t} \cap \overline{\{x_{\sigma(1)} > x_{\sigma(2)} > \dots > x_{\sigma(n)}\}} \big\rangle
& = \big\langle \{x_{\sigma(1)} > \dots > x_{\sigma(i)} = \dots = x_{\sigma(j)} > \dots > x_{\sigma(n)}\} \big\rangle \\
& = \{x \in \mathbb{R}^n\ |\ x_{\sigma(i)} = x_{\sigma(i+1)} = \dots = x_{\sigma(j)}\} \\
& = E\big(\{\sigma(i),\, \sigma(i+1),\, \dots,\, \sigma(j)\}\big).
\end{align*}
Hence, $E\big(\{\sigma(i),\, \sigma(i+1),\, \dots,\, \sigma(j)\}\big)$ is the minimal edge containing the face\\ $H_{i_s, i_t} \cap \overline{\{x_{\sigma(1)} > x_{\sigma(2)} > \dots > x_{\sigma(n)}\}}$.
\end{proof}

\begin{lemma} \label{LeAn}
Let $I \subseteq [n]$. Then, $l\big(E(I)\big) = (|I|-2)!\,(n-|I|+1)!$.
\end{lemma}

\begin{proof} Let $H_{i_1, i_r}$ be a hyperplane containing $E(I)$. We have to count the chambers $\{x_{\sigma(1)} > x_{\sigma(2)} > \dots > x_{\sigma(n)}\}$ such that $\big\langle H_{i_1, i_r} \cap \overline{\{x_{\sigma(1)} > x_{\sigma(2)} > \dots > x_{\sigma(n)}\}} \big\rangle = E(I)$.\\
Let $\nu$ be a permutation of $\{2, \dots, r-1\}$. These chambers correspond to the chambers having the forms
\begin{align*}
& \{\dots > x_{i_1} > x_{i_{\nu(2)}} > \dots > x_{i_{\nu(r-1)}} > x_{i_r} > \dots\} \\
&\text{and}\ \{\dots > x_{i_r} > x_{i_{\nu(2)}} > \dots > x_{i_{\nu(r-1)}} > x_{i_1} > \dots\}.
\end{align*}
Because of the coefficient $\frac{1}{2}$ in the multiplicity, we just need to consider the chambers
$$\{x_{\sigma(1)} > x_{\sigma(2)} > \dots > x_{\sigma(n)}\} =  \{\dots > x_{i_1} > x_{i_{\nu(2)}} > \dots > x_{i_{\nu(r-1)}} > x_{i_r} > \dots\}.$$
Let $i \in [n]$ such that $\sigma(i) = i_1$. We have:
\begin{itemize}
\item[$\bullet$] $(n-r)!$ possibilities for the sequence $\big(\sigma(1),\, \dots,\, \sigma(i-1),\, \sigma(i+r),\, \dots,\,  \sigma(n)\big)$,
\item[$\bullet$] $(r-2)!$ possibilities for the sequence $\big(\sigma(i+1),\, \dots,\, \sigma(i+r-2)\big)$,
\item[$\bullet$] and $n-r+1$ possibilities to choose $i$ since we must have $i \in [n-r+1]$.
\end{itemize}
Then $l\big( E(I) \big) = (n-r)! \times (r-2)! \times (n-r+1) = (r-2)!\,(n-r+1)!$.
\end{proof}

\noindent We obtain Proposition \ref{detAn} by combining Proposition \ref{PrRe} and Lemma \ref{LeAn}.

\paragraph{Hyperplane Arrangement Associated to Hyperoctahedral Group.} Let $[\pm n] := \{-n, \dots, -2, -1, 1, 2, \dots, n\}$. Denote $\overline{2^{[\pm n]}}$ the subset of $2^{[\pm n]}$ having the following properties:
\begin{itemize}
\item[$\bullet$] the elements of $\overline{2^{[\pm n]}}$ are the elements $\{i_1, \dots, i_t\}$ of $2^{[\pm n]}$ such that $|i_r| \neq |i_s|$ if $r \neq s$,
\item[$\bullet$] and if $\{i_1, \dots, i_t\} \in \overline{2^{[\pm n]}}$, then $\{-i_1, \dots, -i_t\} \notin \overline{2^{[\pm n]}}$.
\end{itemize}
For example,
\begin{align*}
2^{[\pm 3]} = & \big\{\emptyset,\, \{1\},\, \{2\},\, \{3\},\\
& \{1,2\},\, \{-1,2\},\, \{1,3\},\, \{-1,3\},\, \{2,3\},\, \{-2,3\},\\
& \{1,2,3\},\, \{-1,2,3\},\, \{-1,-2,3\},\, \{1,-2,3\}\big\}.
\end{align*}

\noindent The hyperplane arrangement $\mathcal{O}_n$ associated to the hyperoctahedral group $B_n$ consists of
\begin{itemize}
\item the $\binom{n}{2}$ hyperplanes $H_{i,j} = \{x \in \mathbb{R}^n \ |\ x_i - x_j=0\}$ with $1 \leq i < j \leq n$,
\item the $\binom{n}{2}$ hyperplanes $H_{-i,j} = \{x \in \mathbb{R}^n \ |\ x_i+x_j=0\}$ with $1 \leq i < j \leq n$,
\item the $n$ hyperplanes $H_i = \{x \in \mathbb{R}^n \ |\ x_i = 0\}$ with $i \in [n]$.
\end{itemize}
We assign the weights $a_{i,j}$ to the hyperplanes $H_{i,j}$, the weights $a_{-i,j}$ to the hyperplanes $H_{-i,j}$, and the weights $a_i$ to the hyperplanes $H_i$. 

\begin{proposition} \label{DetBn}
Let $n \geq 2$. We have 
\begin{align*}
\det \mathcal{O}_n = & \prod_{\substack{J \in \overline{2^{[\pm n]}} \\ |J| \geq 2}} \Big(1- \prod_{\{i,j\} \in \binom{J}{2}} a_{i,j}^2\Big)^{2^{n-|J|+1}\,(|J|-2)!\,(n-|J|+1)!} \\
& \prod_{\substack{I \in 2^{[n]} \\ |I| \geq 1}} \Big(1- \prod_{i \in I} a_i^2
\prod_{\{i,j\} \in \binom{I}{2}} a_{i,j}^2\, a_{-i,j}^2\Big)^{2^{n-1}\,(|I|-1)!\,(n-|I|)!}.
\end{align*}
\end{proposition}

\noindent Each chamber of $\mathcal{O}_n$ is defined by $\{x \in \mathbb{R}^n\ |\ \epsilon_1 x_{\sigma(1)} > \epsilon_2 x_{\sigma(2)} > \dots > \epsilon_n x_{\sigma(n)} > 0\}$, where $\epsilon_i \in [\pm 1]$ and $\sigma$ is a permutation of $[n]$. For $J = \{\epsilon_1 i_1,\, \epsilon_2 i_2,\, \dots,\, \epsilon_r i_r\} \in \overline{2^{[\pm n]}}$, with $|J| \geq 2$, denote $E(J)$ the edge $$E(J) := \{x \in \mathbb{R}^n\ |\ \epsilon_1 x_{i_1} = \epsilon_2 x_{i_2} = \dots = \epsilon_r x_{i_r}\}.$$
And for $I =\{i_1, i_2, \dots, i_r\} \subseteq 2^{[n]}$, with $r \geq 1$, denote $E(I_0)$ the edge
$$E(I_0) := \{x \in \mathbb{R}^n\ |\ x_{i_1} = x_{i_2} = \dots = x_{i_r} = 0\}.$$

\begin{lemma}
Let $n \geq 2$. We have $$\mathfrak{R}(\mathcal{O}_n) = \big\{E(J)\ |\ J \in \overline{2^{[\pm n]}},\, |J| \geq 2\big\} \, \cup \, \big\{E(I_0)\ |\ I \in 2^{[n]},\, |I| \geq 1\big\}.$$
\end{lemma}

\begin{proof}
Consider a hyperplane $H_{\epsilon_s s,\epsilon_t t}$ of $\mathcal{O}_n$, and a permutation $\sigma$ of $[n]$ such that $\sigma(i) = s$ and $\sigma(j) = t$ with $i < j$. If
\begin{itemize}
\item[$\bullet$] $\{\epsilon_s s, \epsilon_t t\} = \{\epsilon_i \sigma(i), \epsilon_j \sigma(j)\}$, then
\begin{align*}
\big\langle H_{\epsilon_s s, \epsilon_t t} \cap & \, \overline{\{\epsilon_1 x_{\sigma(1)} > \epsilon_2 x_{\sigma(2)} > \dots > \epsilon_n x_{\sigma(n)} > 0\}} \big\rangle \\
& = \big\langle \{\epsilon_1 x_{\sigma(1)} > \dots > \epsilon_i x_{\sigma(i)} = \dots = \epsilon_j x_{\sigma(j)} > \dots > \epsilon_n x_{\sigma(n)} > 0\} \big\rangle \\
& = \{\epsilon_i x_{\sigma(i)} = \epsilon_{i+1} x_{\sigma(i+1)} = \dots = \epsilon_j x_{\sigma(j)}\} \\
& = E\big(\{\epsilon_i \sigma(i),\, \epsilon_{i+1} \sigma(i+1),\, \dots,\, \epsilon_j \sigma(j)\}\big).
\end{align*}
\item[$\bullet$] $\{\epsilon_s i_s, \epsilon_t i_t\} \neq \{\epsilon_i \sigma(i), \epsilon_j \sigma(j)\}$, then
\begin{align*}
\big\langle H_{\epsilon_s s, \epsilon_t t} \cap & \, \overline{\{\epsilon_1 x_{\sigma(1)} > \epsilon_2 x_{\sigma(2)} > \dots > \epsilon_n x_{\sigma(n)} > 0\}} \big\rangle \\
& = \big\langle \{\epsilon_1 x_{\sigma(1)} > \dots > \epsilon_{i-1} x_{\sigma(i-1)} > x_{\sigma(i)} = \dots = x_{\sigma(n)} = 0\} \big\rangle \\
& = \{x_{\sigma(i)} = x_{\sigma(i+1)} = \dots = x_{\sigma(n)} = 0\} \\
& = E\big(\{0,\, \sigma(i),\, \sigma(i+1),\, \dots,\, \sigma(n)\}\big).
\end{align*}
\end{itemize}
Consider a hyperplane $H_u$ of $\mathcal{O}_n$, and a permutation $\sigma$ of $[n]$ such that $\sigma(i) = u$. Then,
\begin{align*}
\big\langle H_u \cap & \, \overline{\{\epsilon_1 x_{\sigma(1)} > \epsilon_2 x_{\sigma(2)} > \dots > \epsilon_n x_{\sigma(n)} > 0\}} \big\rangle \\
& = \big\langle \{\epsilon_1 x_{\sigma(1)} > \dots > \epsilon_{i-1} x_{\sigma(i-1)} > x_{\sigma(i)} = \dots = x_{\sigma(n)} = 0\} \big\rangle \\
& = \{x_{\sigma(i)} = x_{\sigma(i+1)} = \dots = x_{\sigma(n)} = 0\} \\
& = E\big(\{0,\, \sigma(i),\, \sigma(i+1),\, \dots,\, \sigma(n)\}\big).
\end{align*}
\end{proof}

\begin{lemma} \label{LeBn}
Let $E(J),\, E(I_0) \in \mathfrak{R}(O_n)$. Then,
\begin{align*}
& \mathsf{a}\big(E(J)\big) = \prod_{\{s,t\} \in \binom{J}{2}} a_{s,t} \quad \text{with} \quad l\big(E(J)\big) = 2^{n-|J|+1}(|J|-2)!\,(n-|J|+1)!,\\
& \mathsf{a}\big(E(I_0)\big) = \prod_{u \in I} a_u \prod_{\{s,t\} \in \binom{I}{2}} a_{s,t} a_{-s,t} \quad \text{with} \quad l\big(E(I_0)\big) = 2^{n-1}(|I|-1)!\,(n-|I|)!.
\end{align*}
\end{lemma}

\begin{proof}
We have $$\mathsf{a}\big(E(J)\big) = \prod_{\substack{H \in \mathcal{O}_n \\ E(J) \subseteq H}} \mathsf{a}(H) = \prod_{\{s,t\} \in \binom{J}{2}} a_{s,t},$$
and $$\mathsf{a}\big(E(I_0)\big) = \prod_{\substack{H \in \mathcal{O}_n \\ E(I_0) \subseteq H}} \mathsf{a}(H) = \prod_{u \in I} a_u \prod_{\{s,t\} \in \binom{I}{2}} a_{s,t} a_{-s,t}.$$
To the edge $E(J)$, assign the hyperplane $H_{\epsilon_1 i_1,\, \epsilon_r i_r}$ containing it. We first have to count the chambers $\{\epsilon_1 x_{\sigma(1)} > \epsilon_2 x_{\sigma(2)} > \dots > \epsilon_n x_{\sigma(n)} > 0\}$ such that
$$\big\langle H_{\epsilon_1 i_1,\, \epsilon_r i_r} \cap \overline{\{\epsilon_1 x_{\sigma(1)} > \epsilon_2 x_{\sigma(2)} > \dots > \epsilon_n x_{\sigma(n)} > 0\}} \big\rangle = E(J).$$
Let $\nu$ be a permutation of $\{2, \dots, r-1\}$. These chambers correspond to the chambers having the forms
\begin{align*}
& \{\dots > \epsilon_1 x_{i_1} > \epsilon_{\nu(2)} x_{i_{\nu(2)}} > \dots > \epsilon_{\nu(r-1)} x_{i_{\nu(r-1)}} > \epsilon_r x_{i_r} > \dots\}, \\
& \{\dots > \epsilon_r x_{i_r} > \epsilon_{\nu(2)} x_{i_{\nu(2)}} > \dots > \epsilon_{\nu(r-1)} x_{i_{\nu(r-1)}} > \epsilon_1 x_{i_1} > \dots\}, \\
& \{\dots > -\epsilon_1 x_{i_1} > \epsilon_{\nu(2)} x_{i_{\nu(2)}} > \dots > \epsilon_{\nu(r-1)} x_{i_{\nu(r-1)}} > -\epsilon_r x_{i_r} > \dots\}, \\
\text{and} \quad & \{\dots > -\epsilon_r x_{i_r} > \epsilon_{\nu(2)} x_{i_{\nu(2)}} > \dots > \epsilon_{\nu(r-1)} x_{i_{\nu(r-1)}} > -\epsilon_1 x_{i_1} > \dots\}.
\end{align*}
Because of the coefficient $\frac{1}{2}$ in the multiplicity and the symmetry of the signed permutation, we just need to consider the chambers
\begin{align*}
\{\epsilon_1 x_{\sigma(1)} > & \, \epsilon_2 x_{\sigma(2)} > \dots > \epsilon_n x_{\sigma(n)} > 0\} \\
& = \{\dots > \epsilon_1 x_{i_1} > \epsilon_{\nu(2)} x_{i_{\nu(2)}} > \dots > \epsilon_{\nu(r-1)} x_{i_{\nu(r-1)}} > \epsilon_r x_{i_r} > \dots\}
\end{align*}
and multiply the obtained cardinality by $2$.
Let $i \in [n]$ such that $\sigma(i) = i_1$. We have:
\begin{itemize}
\item[$\bullet$] $2^{n-r}(n-r)!$ possibilities for the sequence $\big(\epsilon_1 \sigma(1),\, \dots,\, \epsilon_{i-1} \sigma(i-1),\, \epsilon_{i+r} \sigma(i+r),\, \dots,\, \epsilon_n \sigma(n)\big)$,
\item[$\bullet$] $(r-2)!$ possibilities for the sequence $\big(\epsilon_{i+1} \sigma(i+1),\, \dots,\, \epsilon_{i+r-2} \sigma(i+r-2)\big)$,
\item[$\bullet$] and $n-r+1$ possibilities to choose $i$ since we must have $i \in [n-r+1]$.
\end{itemize}
Then $l\big(E(J)\big) = 2 \times 2^{n-r}(n-r)! \times (r-2)! \times (n-r+1) = 2^{n-r+1}(r-2)!(n-r+1)!$.

\smallskip

\noindent To the edge $E(I_0)$, assign the hyperplane $H_{i_r}$ containing it. Now, we have to count the chambers $\{\epsilon_1 x_{\sigma(1)} > \epsilon_2 x_{\sigma(2)} > \dots > \epsilon_n x_{\sigma(n)} > 0\}$ such that
$$\big\langle H_{i_r} \cap \overline{\{\epsilon_1 x_{\sigma(1)} > \epsilon_2 x_{\sigma(2)} > \dots > \epsilon_n x_{\sigma(n)} > 0\}} \big\rangle = E(I_0).$$
Let $\nu$ be a permutation of $[r-1]$. Those chambers correspond to the chambers having the forms
\begin{align*}
& \{\dots > x_{i_r} > \epsilon_{\nu(r-1)} x_{i_{\nu(r-1)}} > \dots > \epsilon_{\nu(2)} x_{i_{\nu(2)}} > \epsilon_{\nu(1)} x_{i_{\nu(1)}} > 0\} \\
\text{and} \quad & \{\dots > - x_{i_r} > \epsilon_{\nu(r-1)} x_{i_{\nu(r-1)}} > \dots > \epsilon_{\nu(2)} x_{i_{\nu(2)}} > \epsilon_{\nu(1)} x_{i_{\nu(1)}} > 0\}.
\end{align*}
Because of the coefficient $\frac{1}{2}$ in the multiplicity, we just need to consider the chambers
\begin{align*}
\{\epsilon_1 x_{\sigma(1)} > & \, \epsilon_2 x_{\sigma(2)} > \dots > \epsilon_n x_{\sigma(n)} > 0\} \\
& = \{\dots > x_{i_r} > \epsilon_{\nu(r-1)} x_{i_{\nu(r-1)}} > \dots > \epsilon_{\nu(2)} x_{i_{\nu(2)}} > \epsilon_{\nu(1)} x_{i_{\nu(1)}} > 0\}.
\end{align*}
We have:
\begin{itemize}
\item[$\bullet$] $2^{n-r}(n-r)!$ possibilities for the sequence $\big(\epsilon_1 \sigma(1),\, \dots,\, \epsilon_{n-r} \sigma(n-r)\big)$,
\item[$\bullet$] and $2^{r-1}(r-1)!$ possibilities for the sequence $\big((\epsilon_{n-r+2} \sigma(n-r+2),\, \dots,\, \epsilon_n \sigma(n)\big)$.
\end{itemize}
Then $l\big(E(I_0)\big) = 2^{n-r}(n-r)! \times 2^{r-1}(r-1)! = 2^{n-1}(r-1)!(n-r)!$.
\end{proof}

\noindent We obtain Proposition \ref{DetBn} by combining Proposition \ref{PrRe} and Lemma \ref{LeBn}.

\bibliographystyle{abbrvnat}

\end{document}